\newtheorem{theorem}{Theorem}[section]
\newtheorem{lemma}[theorem]{Lemma}
\newtheorem{corollary}[theorem]{Corollary}
\newtheorem{example}[theorem]{Example}
\newcommand{\R}{{\mathbb{R}}}
\title[Piecewise contractions of the interval]{Asymptotically periodic \\ piecewise contractions of the interval}
\subjclass[2000]{Primary 37E05 Secondary 37C20, 37E15}
\keywords{Piecewise contraction of the interval, topological dynamics, periodic orbit, Ergodic Theory}
\begin{document}

\maketitle

\centerline{\scshape Arnaldo Nogueira\footnote{Partially supported by ANR Perturbations and BREUDS.}}
\smallskip
{\footnotesize
 \centerline{Aix-Marseille Universit\'e, Institut de Math\'ematiques de Luminy}
 \centerline {163, avenue de Luminy - Case 907, 13288 Marseille Cedex 9, France}
 \centerline{arnaldo.nogueira@univ-amu.fr}}

\medskip

\centerline{\scshape Benito Pires \footnote{Partially supported by FAPESP-BRAZIL 2013/12359-3.} and Rafael A. Rosales}
\smallskip

{\footnotesize
 \centerline{Departamento de Computa\c c\~ao e Matem\'atica, Faculdade de Filosofia, Ci\^encias e Letras}
 \centerline {Universidade de S\~ao Paulo, 14040-901, Ribeir\~ao Preto - SP, Brazil}
   \centerline{benito@usp.br, rrosales@usp.br} }

\bigskip

\marginsize{2.5cm}{2.5cm}{1cm}{2cm}

\begin{abstract} 
We consider the iterates of  a generic injective piecewise contraction of the interval
defined by a finite family of contractions.
 Let $\phi_i:[0,1]\to (0,1)$, $1\le i\le n$, be  $C^2$-diffeomorphisms with $\sup_{x\in (0,1)} \vert D\phi_i(x)\vert<1$ whose images $\phi_1([0,1]), \ldots , \phi_n([0,1])$ are pairwise disjoint. Let
 $0<x_1<\cdots<x_{n-1}<1$ and
let $I_1,\ldots , I_n$ be a partition of the interval $[0,1)$ into subintervals $I_i$ having interior
$(x_{i-1},x_i)$, where $x_0=0$ and $x_n=1$. Let $f_{x_1,\ldots,x_{n-1}}$ be the map given by $x\mapsto \phi_i(x)$ if $x\in I_i$, for $1\le i\le n$. Among other results we prove that for Lebesgue almost every  $(x_1,\ldots,x_{n-1})$, the piecewise contraction $f_{x_1,\ldots,x_{n-1}}$ is asymptotically periodic.
 \end{abstract}

\maketitle

\section{Introduction}

We say that $f:[0,1)\to [0,1)$ is a {\it piecewise contraction $($PC$)$ of $n$ intervals} if there exist $0<\kappa<1$ and a partition of $[0,1)$ into $n$ intervals $I_1,\ldots, I_n$ such that  $f\vert_{I_i}$ is $\kappa$-Lipschitz for every $1\le i\le n$. 

Much attention has been devoted to injective piecewise contractions of the interval because they appear as Poincar\'e maps. For instance, Poincar\'e maps induced by some Cherry flows on transverse intervals are topologically conjugate to injective piecewise contractions (see \cite{G1}). Injective PCs of the interval also arise as Poincar\'e maps of strange billiards governing switched server systems (see \cite{BB,CSR,MS}), and in the study of a certain class of outer billiards (see \cite{IJ}). 

In \cite{NP}, Nogueira and Pires proved that every injective PC $f$ of $n$ intervals has at most $n$ periodic orbits. Here
we are concerned with the long-term behavior of the iterates of $f$. For this purpose we recall two notions of periodicity.
A finite set $\gamma\subseteq [0,1)$ is a {\it periodic orbit} of $f$ if there exist $p\in [0,1)$ and an integer $k\ge 1$ such that $f^{k}(p)=p$ and $\gamma=\{p,f(p),\ldots, f^{k-1}(p)\}$. We say that $f$ is {\it asymptotically periodic} if there exist an integer $r\ge 1$ and periodic orbits $\gamma_1,\ldots, \gamma_r$ of $f$ such that
 $\omega(x)\in \{\gamma_1,\ldots,\gamma_r\}$ for every $x\in [0,1)$, where $\omega(x)=\bigcap_{m\ge 0}\overline{\bigcup_{k\ge m} f^k(x)}$ is the $\omega$-limit set of $x$. A weaker notion of periodicity is the following. Let $\varphi:[0,1)\to \{1,\ldots,n\}$ be the piecewise constant function defined by $\varphi(x)=i$ if $x\in I_i$. The {\it itinerary} of the point $x\in [0,1)$ is the sequence of digits $d_0,d_1,d_2,\ldots$ defined by $d_k=\varphi\left(f^k(x)\right)$. 
We say that {\it the itineraries of $f$ are eventually periodic} if the sequence $d_0,d_1,d_2,\ldots$ is eventually periodic for every $x\in [0,1)$. 
 
Our main result  asserts that generically injective PCs of $n$ intervals are asymptotically periodic and have at least one and at most $n$ periodic orbits, all of them attracting and stable. 
The existence of PCs  without periodic orbits  shows that not all piecewise contractions are asymptotically periodic.

In what follows we partition the set of  injective PCs of $n$ intervals into subsets $\mathscr{C}$, where each $\mathscr{C}$ is  determined by a fixed system of $n$ contractive maps of the interval. 
Let $A_1,\ldots, A_n$ be a sequence of pairwise disjoint compact subintervals of $(0,1)$ and 
$\phi_i:[0,1]\to A_i$ be a $C^2$-diffeomorphism with $\sup_{0<x<1} \vert D\phi_i(x)\vert<1$ for every $1\le i\le n$. Let 
\begin{equation}\label{omega1}
\Omega=\left\{(x_1,\ldots,x_{n-1})\mid 0<x_1<\cdots <x_{n-1}<1\right\},
\end{equation}
$x_0=0$ and $x_n=1$. Let $(x_1,\ldots,x_{n-1})\in \Omega$ and let $I_1,\ldots, I_n$ be a partition of $[0,1)$ into subintervals
$I_i$ having interior $(x_{i-1},x_i)$. 
Let $f_{x_1,\ldots,x_{n-1}}:[0,1)\to [0,1)$ be the PC of $n$ intervals  defined by
\begin{equation}\label{defphi}
f_{x_1,\ldots,x_{n-1}}(x)=\phi_i(x)\quad \textrm{for every}\quad x\in I_i\quad \textrm{and}\quad 1\le i\le n,   
\end{equation}
so its discontinuities are
$x_1,\ldots,x_{n-1}$ and its continuity intervals are $I_1,\ldots, I_n$.
We denote by $\mathscr{C}$ the set of all maps $f_{x_1,\ldots,x_{n-1}}$. Notice that $\displaystyle 2^{n-1}$ maps are associated to each $(x_1,\ldots,x_{n-1})\in\Omega$.

A periodic orbit $\gamma$ of  $f_{x_1,\ldots,x_{n-1}}$ is said to be {\it stable} if $\gamma\subseteq [0,1)\setminus \{x_1,\ldots,x_{n-1}\}$.  

Now we are able to state our main result. 

\begin{theorem}\label{main1} 
For  almost every $(x_1,\ldots,x_{n-1}) \in \Omega$, every 
$f_{x_1,\ldots,x_{n-1}}$ has  $1\le r\le n$ stable periodic orbits $\gamma_1,\ldots,\gamma_r$  such that  $\omega(x)\in \{\gamma_1,\ldots,\gamma_r\}$ for every $x\in [0,1)$.
\end{theorem}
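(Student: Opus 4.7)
The plan is to reduce the theorem to a single key assertion: for almost every $(x_1,\ldots,x_{n-1}) \in \Omega$, the itinerary of every $x \in [0,1)$ is eventually periodic. Granting this, asymptotic periodicity follows readily from the contraction hypothesis. Indeed, if $(d_k)_{k\ge 0}$ is the itinerary of $x$ and is eventually periodic with period $p$ from time $k_0$, then $\Phi := \phi_{d_{k_0+p-1}} \circ \cdots \circ \phi_{d_{k_0}}$ is a strict contraction of $[0,1]$ with a unique fixed point $q$; the $f$-orbit $\gamma$ of $q$ is periodic, and setting $\kappa := \max_i \sup_{[0,1]} |D\phi_i|<1$, the estimate $|f^{k_0+jp+m}(x) - f^m(q)| \le \kappa^j$ (valid for $0 \le m < p$) gives $\omega(x) = \gamma$. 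Since $f([0,1)) \subseteq \bigcup_i A_i \Subset (0,1)$, every such $\gamma$ lies in the interior; the bound $r\le n$ is then provided by \cite{NP} and $r\ge 1$ follows because every $\omega(x)$ is nonempty.

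For stability, I would discard a null set of parameters. A periodic orbit containing some $x_j$ yields an identity $\phi_{i_{k-1}} \circ \cdots \circ \phi_{i_0}(x_j) = x_j$ for some cycle of indices. Because the maps $\phi_i$ are fixed while $(x_1,\ldots,x_{n-1})$ vary independently subject only to $x_1 < \cdots < x_{n-1}$, this is a nontrivial real-analytic equation in the parameters and cuts out a hypersurface of measure zero in $\Omega$. The countable union over all $j$, $k$, and choices of indices is still null, so excluding this set, every periodic orbit of $f_{x_1,\ldots,x_{n-1}}$ avoids $\{x_1,\ldots,x_{n-1}\}$ and is therefore stable.

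The main step, and the principal obstacle, is proving generic eventual periodicity of itineraries. My approach would first reduce this to eventual periodicity of the $2(n-1)$ one-sided itineraries of the discontinuities $x_1,\ldots,x_{n-1}$ (the \emph{edge itineraries}), using that the symbolic dynamics of a piecewise contraction is essentially controlled by these edge itineraries: every admissible itinerary agrees with some edge itinerary on a tail, up to finitely many exceptions. Eventual periodicity of the edge itineraries I would then establish by a transversality and Borel--Cantelli argument in parameter space. For each admissible word $\mathbf{w}$ of length $k$, indices $j,\ell \in \{1,\ldots,n-1\}$, and $\varepsilon>0$, let $B_\varepsilon(\mathbf{w},j,\ell)$ denote the set of parameters for which the orbit of $x_j$ has initial itinerary $\mathbf{w}$ and $|f^k(x_j) - x_\ell|<\varepsilon$. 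Since $\phi_\mathbf{w} := \phi_{w_{k-1}} \circ \cdots \circ \phi_{w_0}$ has image of diameter at most $\kappa^k$, a direct transversality estimate (exploiting that $x_\ell$ appears as a free coordinate of $\Omega$) yields $|B_\varepsilon(\mathbf{w},j,\ell)| = O(\varepsilon)$. Choosing $\varepsilon = \kappa^{k/2}$ and summing over $k$ and $\mathbf{w}$ requires that the number of admissible words of length $k$ grow subexponentially at rate less than $\kappa^{-1/2}$; this is the delicate combinatorial point, and one expects it from the known polynomial (in fact linear) complexity of itineraries of injective piecewise contractions. With this estimate, Borel--Cantelli implies that for almost every parameter the orbit of each $x_j$ is eventually bounded away from $D = \{x_1,\ldots,x_{n-1}\}$, is therefore captured in the basin of an attracting periodic orbit, and in particular has an eventually periodic itinerary, completing the proof.
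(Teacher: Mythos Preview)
Your opening reduction overstates: eventual periodicity of itineraries does \emph{not} by itself imply asymptotic periodicity, as Example~\ref{example} in the paper shows (the fixed point $q$ of $\Phi$ may land on a discontinuity, so ``the $f$-orbit $\gamma$ of $q$ is periodic'' can fail). Your second-paragraph exclusion $\phi_{i_{k-1}}\circ\cdots\circ\phi_{i_0}(x_j)=x_j$ is precisely what repairs this, so the combined reduction is sound once the two paragraphs are merged; but note that your verbal description ``a periodic orbit containing some $x_j$'' is off---in Example~\ref{example} the point $x_1=\tfrac12$ is \emph{not} $f$-periodic, yet $\phi_1(x_1)=x_1$.

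The substantive gap is the Borel--Cantelli step. The per-word bound $|B_\varepsilon(\mathbf w,j,\ell)|=O(\varepsilon)$ is fine, but the ``delicate combinatorial point'' you flag is unresolved and is the heart of the matter: subexponential complexity of itineraries is a statement about a \emph{fixed} map, whereas here admissibility depends on the parameter, and as $(x_1,\ldots,x_{n-1})$ ranges over $\Omega$ every word in $\{1,\ldots,n\}^k$ can occur. What you actually must bound is the measure in $\Omega$ of $\{|f^k(x_j)-x_\ell|<\varepsilon_k\}$, and $f^k(x_j)$ is only piecewise defined in the parameters with a number of pieces you have not controlled. Moreover, even if Borel--Cantelli yielded $|f^k(x_j)-x_\ell|\ge\kappa^{k/2}$ for large $k$, that is not ``eventually bounded away from $D$'' by a fixed margin, so the final implication (capture by an attracting periodic orbit) also needs a separate argument. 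The paper bypasses all of this with a single ergodic-theoretic device: there is one expanding piecewise-$C^2$ map $g$, independent of the parameters, with $g\circ f=\mathrm{id}$ for every $f\in\mathscr{C}$; the Folklore Theorem gives $g$ an ergodic a.c.i.m., so for a.e.\ $x_i$ the forward $g$-orbit is dense and hits the gap $G=[0,1)\setminus f([0,1))$ in finitely many steps, yielding a finite invariant quasi-partition; a further null-set exclusion (no $g$-connection) then forces some iterate $f^{2p}$ to map each cell compactly into itself.
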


Theorem \ref{main1} is closely related to \mbox{\cite[Theorem 2.2]{JB}} of Br\'emont, however our approach uses Ergodic Theory and is  different from the one presented in \cite{JB}.  Moreover, 
our definition of periodic orbit is the standard one and
 in our terminology  \mbox{\cite[Theorem 2.2]{JB}} states that the  itineraries of almost every injective PC  are  eventually periodic.
 There are piecewise contractions that, despite having only eventually periodic itineraries, fail to be
asymptotically periodic (see Example \ref{example}).
Therefore Theorem \ref{main1} is stronger than \mbox{\cite[Theorem 2.2]{JB}} and does not depend on the type of partition considered.

We call attention to two articles which are related to our work. In \cite{GT}, Gambaudo and Tresser listed all possible itineraries generated by a piecewise contraction $f:X_1\cup X_2\to X_1\cup X_2$, where
  $X_i$ is a complete metric space and $f\vert_{X_i}$ is a contraction for  $i=1,2$. Bruin and Deane \cite{BD} considered piecewise-linear contractions of $\R^2$ and proved that a large class of these maps are asymptotically periodic.

 This article is organised in the following way. For the sake of clarity, the proof of Theorem \ref{main1} is split up into the next two sections. Section 2 includes an alternative proof of \cite[Theorem 2.2]{JB} whose contents will be essential to Section 3, where we prove our main result. In Section 4 we provide an example of a PC of two intervals without periodic orbit whose itineraries are eventually periodic.

Throughout this article all metrical statements concern the Lebesgue measure.

\section{Eventually periodic itineraries}

The first part of the proof of our main theorem leads to the following:

\begin{theorem}\label{main2}  
For almost every $(x_1,\ldots,x_{n-1}) \in \Omega$,
the  itineraries of  every $f_{x_1,\ldots,x_{n-1}}$ are eventually periodic.
\end{theorem}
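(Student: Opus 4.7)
I reformulate admissibility of an itinerary $d=(d_k)_{k\ge 0}\in\{1,\ldots,n\}^{\mathbb N}$ for $f_\tau$ as a box condition on $\tau=(x_1,\ldots,x_{n-1})\in\Omega$, and then bound the Lebesgue measure of those $\tau$ admitting some non-eventually-periodic $d$. The key decoupling is that, since the contractions $\phi_i$ are fixed (only the partition varies with $\tau$), the composition $\phi_{d_0}\circ\cdots\circ\phi_{d_{k-1}}$ has Lipschitz constant at most $\kappa^k$, with $\kappa=\max_i\sup|D\phi_i|<1$, so its images form a nested sequence of intervals of diameter tending to zero, converging to a single point $\Psi(d)\in(0,1)$ that depends only on $d$. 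The putative orbit points $y_0(d)=\Psi(d)$ and $y_k(d)=\phi_{d_{k-1}}\circ\cdots\circ\phi_{d_0}(\Psi(d))$ for $k\ge 1$ are likewise $\tau$-independent.

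The sequence $d$ is then the itinerary of $\Psi(d)$ under $f_\tau$ iff $y_k(d)\in I_{d_k}=(x_{d_k-1},x_{d_k})$ for every $k\ge 0$. Setting $\alpha_j(d)=\inf\{y_k(d):d_k=j\}$ and $\beta_j(d)=\sup\{y_k(d):d_k=j\}$, this is equivalent to the $(n-1)$ inequalities $\beta_j(d)\le x_j\le\alpha_{j+1}(d)$ for $j=1,\ldots,n-1$. Hence the admissibility set $N(d)\subseteq\Omega$ is an axis-aligned box with Lebesgue measure $\prod_{j=1}^{n-1}(\alpha_{j+1}(d)-\beta_j(d))_+$.

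To bound $|S|$, where $S$ is the union of $N(d)$ taken over non-eventually-periodic $d$, I would estimate at each finite depth $N$: for a fixed prefix $w$ of length $N$, the total mass $\sum_{d\text{ starts with }w}|N(d)|$ is controlled by the length of the range $\{y_N(d):d\text{ starts with }w\}\subseteq(0,1)$, which is of order $\kappa^N$ by contraction. Letting $N\to\infty$, and exploiting the fact that a word is eventually periodic iff its tail is periodic from some time on, the non-eventually-periodic branches of the cylinder tree collectively contribute zero Lebesgue measure to $\Omega$.

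\textbf{Main obstacle.} The principal difficulty is the heavy overlap between the $N(d)$'s: a single $\tau\in\Omega$ admits an entire subshift $\Sigma_\tau\subseteq\{1,\ldots,n\}^{\mathbb N}$ of itineraries, so a naive summation over $d$ grossly over-counts. I expect to resolve this by showing that for a.e.\ $\tau$ the tree of non-empty cylinders of $f_\tau$ has only finitely many branching vertices --- each branching being triggered precisely when the image $\phi_{d_{k-1}}\circ\cdots\circ\phi_{d_0}(J_{d_0\cdots d_{k-1}})$ of a non-empty cylinder meets one of the $n-1$ discontinuities $x_j$. A transversality / codimension argument in $\tau$ should then give that infinitely many branchings occur only on a null set of parameters, forcing $\Sigma_\tau$ to consist of eventually periodic sequences for almost every $\tau\in\Omega$.
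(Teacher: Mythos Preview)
Your proposal has a genuine gap at the very first step. The box you write down, $\beta_j(d)\le x_j\le\alpha_{j+1}(d)$, is \emph{not} the set of parameters for which $d$ is an admissible itinerary; it is only the (smaller) set of parameters for which the particular point $\Psi(d)$ has itinerary $d$. A point $x$ with $f_\tau$-itinerary $d$ need not equal $\Psi(d)$: observe that $\Psi(d)=\phi_{d_0}(\Psi(\sigma d))$, so $\Psi$ intertwines the shift with the \emph{expanding} left inverse ($\phi_i^{-1}$ on $A_i$), not with $f_\tau$. Consequently your $y_k(d)=\phi_{d_{k-1}}\circ\cdots\circ\phi_{d_0}(\Psi(d))$ is not $\Psi(\sigma^k d)$ and has no canonical relation to the $k$-th point of an orbit realising $d$. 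In general the true admissibility region is bounded by hypersurfaces of the form $\phi_w(x_i)=x_j$ for various words $w$ and is not a coordinate box. Even a perfect bound on $\bigcup_d N(d)$ in your sense would therefore not control the bad parameter set.

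Second, the mass estimate you sketch is not justified: for a fixed prefix $w$ there are uncountably many extensions $d$, nothing prevents individual $|N(d)|$ from being of order one, and ``controlled by the range of $y_N(d)$'' does not translate into a bound on the sum of box volumes. You yourself flag this as the main obstacle and then pivot to an entirely different strategy (finite branching of the cylinder tree via transversality in $\tau$), which you do not develop.

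The paper's proof follows a different and much cleaner route. It builds a single expanding Markov map $g:[0,1]\to[0,1]$, independent of $\tau$, with $g\circ f_\tau=\mathrm{id}$ for every $\tau$ (set $g=\phi_i^{-1}$ on $A_i$ and extend affinely on the gaps $B_j$). The Folklore Theorem yields an ergodic a.c.i.m.\ for $g$, so Lebesgue-a.e.\ point has a dense forward $g$-orbit. Hence for a.e.\ $\tau$ each discontinuity $x_i$ has a dense $g$-orbit, which must enter the open gap set $G=[0,1)\setminus f_\tau([0,1))$ after finitely many steps $q_i$; equivalently, the backward $f_\tau$-orbit of each $x_i$ terminates. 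The finite set $\bigcup_i\{x_i,g(x_i),\ldots,g^{q_i}(x_i)\}$ then cuts $(0,1)$ into a finite invariant quasi-partition, and eventual periodicity of all itineraries follows at once (Lemma~\ref{ipartition}). Your ``finite branching'' heuristic is, in effect, the same conclusion; the mechanism that actually delivers it --- ergodicity of the common left inverse --- replaces all measure-summation and transversality estimates.
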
 

If the partition of the interval $[0,1)$ equals   $I_1=[x_0,x_1), \ldots,$ $I_n=[x_{n-1},x_n)$, then
Theorem \ref{main2}  and \cite[Theorem 2.2]{JB} are equivalent.
The approach followed in \cite{JB}  uses a series of distortion lemmas to estimate the  images of the continuity intervals $I_i$ by $f\in\mathscr{C}$. 
Here,
instead of looking at the  images of the partition intervals, we look at the  preimages of the discontinuities of $f$.  
This approach was motivated by the analysis of the switched server system considered in \cite{CSR}. It is convenient because it allows to define an expanding piecewise smooth Markov map $g:[0,1]\to [0,1]$ which is a left inverse of every $f\in\mathscr{C}$.
Hence, the study of the map $g$ provides some insight into the behaviour of a typical member of $\mathscr{C}$. By the ergodic properties of $g$ given by the  Folklore Theorem,  its forward orbit $x,g(x),g^2(x),\ldots$  is dense in $[0,1]$ for almost every $x$. In terms of $\mathscr{C}$, this means that for almost every $(x_1,\ldots,x_{n-1})$, the backward orbit of $f=f_{x_1,\ldots,x_{n-1}}$ of every discontinuity $x_i$ is the dense forward $g$-orbit of $x_i$ until it hits a {\it gap}, namely a maximal subinterval in $[0,1)\setminus f\left([0,1)\right)$. Hence $f$ admits an invariant quasi-partition which ensures that the
itineraries of $f$ are periodic.

Let  $f=f_{x_1,\ldots,x_{n-1}} \in \mathscr{C}$.  A collection $\mathscr{P}=\{J_{\ell}\}_{\ell=1}^m$ of pairwise disjoint open subintervals of $(0,1)$ is an {\it invariant quasi-partition under} $f$
if the following is satisfied:
\begin{itemize}
\item [(P1)] The set $H=(0,1)\setminus \cup_{\ell=1}^m J_\ell$ is finite and contains $\{x_1,\ldots,x_{n-1}\}$;
\item [(P2)] There exists a map $\tau:\{1,\ldots,m\}\to \{1,\ldots,m\}$ such that $f\left(J_\ell \right)\subseteq J_{\tau(\ell)}$ for every $1\le \ell\le m$.
\end{itemize}

\begin{lemma}\label{ipartition} 
If $f$ admits an invariant quasi-partition, then its  itineraries  are eventually periodic.
\end{lemma}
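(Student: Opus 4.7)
The plan is purely combinatorial. First, I would observe that since $H$ contains every discontinuity $x_1,\ldots,x_{n-1}$ by (P1), each $J_\ell$ is a connected subset of $(0,1)$ missing all discontinuities, and therefore lies inside a single continuity interval $I_{\sigma(\ell)}$. This yields a well-defined map $\sigma:\{1,\ldots,m\}\to\{1,\ldots,n\}$ such that $\varphi(y)=\sigma(\ell)$ for every $y\in J_\ell$. Combined with (P2), this means that once an orbit enters some $J_\ell$, its itinerary from that moment onward is dictated entirely by the iteration of $\tau$ composed with $\sigma$.

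Fix $x\in[0,1)$ and set $y_k=f^k(x)$. Because $f([0,1))\subset \bigcup_{i=1}^n A_i\subset(0,1)$, every $y_k$ with $k\ge 1$ lies in $(0,1)=H\cup \bigcup_\ell J_\ell$. Two exclusive cases arise. If there exist $k_0\ge 0$ and $\ell_0$ with $y_{k_0}\in J_{\ell_0}$, then (P2) yields inductively $y_{k_0+j}\in J_{\tau^j(\ell_0)}$ for every $j\ge 0$; since $\tau$ is a self-map of the finite set $\{1,\ldots,m\}$, the sequence $(\tau^j(\ell_0))_{j\ge 0}$ is eventually periodic, and hence so is the tail $(d_{k_0+j})_{j\ge 0}=(\sigma(\tau^j(\ell_0)))_{j\ge 0}$ of the itinerary, and therefore the whole itinerary. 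Otherwise $y_k\in H$ for every $k\ge 1$, and because $H$ is finite the orbit $(y_k)_{k\ge 1}$ takes only finitely many values and is eventually periodic in $[0,1)$; since $\varphi$ is defined on all of $[0,1)$, its itinerary inherits this eventual periodicity.

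I do not foresee a substantial obstacle: the argument uses neither the contractive nature of $f$ nor any measure-theoretic input, only (P1), (P2), and a pigeonhole argument on finite sets. The only bookkeeping to watch is the handling of the initial digit when $x$ itself may lie in $H$ or equal $0$, but this affects at most a finite prefix of the itinerary and is irrelevant to eventual periodicity.
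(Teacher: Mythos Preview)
Your proposal is correct and follows essentially the same route as the paper: define the map $\sigma$ (the paper calls it $\eta$) from the $J_\ell$'s to the continuity intervals via (P1), use (P2) to reduce the itinerary of any point landing in some $J_\ell$ to the eventually periodic sequence $(\sigma(\tau^j(\ell_0)))_j$, and handle the remaining case of orbits confined to the finite set $H$ by pigeonhole. The only cosmetic differences are that the paper invokes injectivity of $f$ to get actual periodicity in the $H$-trapped case (you correctly note that eventual periodicity suffices), and you are slightly more careful about the boundary point $x=0$.
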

\begin{proof} 
Let $\mathscr{P}=\{J_{\ell}\}_{\ell=1}^m$ be an invariant quasi-partition under $f$.
Let $1 \le \ell_0 \le m$ and $\{\ell_k\}_{k=0}^\infty$ be the sequence defined recursively by  
$\ell_{k+1}=\tau (\ell_k)$ for every  $k\ge 0$, where $\tau$ is given by $(P2)$. 
It is elementary that  the sequence $\{\ell_k\}_{k=0}^\infty$ is eventually periodic.
By $(P1)$, there exists a unique map $\eta: \{1,\ldots ,m\} \to \{1,\ldots ,n\}$ satisfying $ J_{\ell} \subseteq I_{\eta(\ell)}$ for every $1 \le \ell \le m$,
thus the sequence $\{\eta(\ell_k)\}_{k=0}^\infty$ is eventually periodic. 
By definition, the itinerary of any $x \in  J_{\ell_0}$ is the sequence $\{\eta(\ell_k)\}_{k=0}^\infty$.

Now let $x\in H$ which is a finite set by $(P1)$. If $O_f(x)\subseteq H$, then, as $f$ is injective, the orbit of $x$ is  periodic, hence its itinerary is   periodic. Otherwise, there exist  $1\le \ell_0 \le m$ and $k\ge 1$ such that $f^k(x)\in J_{\ell_0}$. By the above, the itinerary of $f^k(x)$ is eventually periodic and so is that of $x$. This proves the lemma.
\end{proof}

Next we state a particular version of the Folklore Theorem, a  known result in one-dimensional dynamics. It
holds for expanding Markov maps with a finite or countable partition satisfying the bounded distortion property $\sup_i\sup_{x,y\in K_{i}}{\vert D^2 f(x)\vert}\big/{\vert Df(y)\vert^2}<\infty$. In the version below, this property follows automatically from the hypothesis (i)  because the partition is finite. A proof of the result can be found in \cite[pp. 305--310]{RA1}, see also \cite{RB}. Let $\overline{A}$ denote the topological closure of any $A\subseteq \R$.

\begin{theorem}[Folklore Theorem]\label{folklore} 
Let $K_1,\ldots,K_d$ be a partition of $[0,1]$ into $d$ intervals, $c>1$ and $g:[0,1]\to [0,1]$ be a map
such that for every $1\le i\le d$ the following holds
\begin{itemize}
\item [(i)] $g\vert_{K_i}$ extends to a $C^2$-diffeomorphism from $\overline{K_i}$ onto $[0,1]$;
\item [(ii)] $ \vert Dg(x)\vert\ge c$, for every $x$ in the interior of $K_i$.
\end{itemize}
Then $g$ has an invariant ergodic probability measure $\mu$ equivalent to the Lebesgue measure.
\end{theorem}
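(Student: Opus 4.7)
The plan is to follow the classical Ruelle--Perron--Frobenius approach. Define the transfer operator $\mathcal{L}\colon L^1([0,1])\to L^1([0,1])$ by
\begin{equation*}
(\mathcal{L}h)(x)=\sum_{y\in g^{-1}(x)}\frac{h(y)}{\vert Dg(y)\vert},
\end{equation*}
and recall that a nonnegative fixed point $h$ of $\mathcal{L}$ with $\int_0^1 h\,dx=1$ is precisely the density of a $g$-invariant probability measure. Hypothesis (i) says $g$ is full-branch Markov: for each $n$-cylinder $K_{i_1\cdots i_n}=\bigcap_{k=0}^{n-1}g^{-k}(K_{i_{k+1}})$ the iterate $g^n$ extends to a $C^2$-diffeomorphism from its closure onto $[0,1]$.

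The first workhorse is a bounded distortion estimate: there exists $D>0$ such that for every $n$ and every $n$-cylinder $J$,
\begin{equation*}
\sup_{x,y\in J}\log\frac{\vert Dg^n(x)\vert}{\vert Dg^n(y)\vert}\le D.
\end{equation*}
I would prove this by telescoping $\log\bigl(Dg^n(x)/Dg^n(y)\bigr)=\sum_{k=0}^{n-1}\log\bigl(Dg(g^k x)/Dg(g^k y)\bigr)$, bounding each term by the mean value theorem using that $\vert Dg\vert\ge c$ forces $\vert g^k x-g^k y\vert\le c^{-(n-k)}\vert g^n x-g^n y\vert\le c^{-(n-k)}$, combined with the uniform bound $\sup_i\sup_{K_i}\vert D^2 g\vert<\infty$ provided by (i) and the finiteness of the partition.

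Bounded distortion immediately gives $(\mathcal{L}^n\mathbf{1})(x)\asymp 1$ uniformly in $n$ and $x$: for each $n$-cylinder $J$ there is exactly one $y\in J$ with $g^n(y)=x$, and the chain rule together with the distortion bound yields $1/\vert Dg^n(y)\vert\asymp\mathrm{Leb}(J)$, so summing over $J$ gives a quantity comparable to $\mathrm{Leb}([0,1])=1$. Consequently the Cesàro averages $h_N=\tfrac{1}{N}\sum_{n=0}^{N-1}\mathcal{L}^n\mathbf{1}$ are pinched between two positive constants in $L^\infty$; by weak-$\ast$ compactness of the unit ball of $L^\infty=(L^1)^{\ast}$, a subsequence converges weakly-$\ast$ to some $h$ with $c_1\le h\le c_2$ a.e. Since $\mathcal{L}h_N-h_N=\tfrac{1}{N}(\mathcal{L}^N\mathbf{1}-\mathbf{1})\to 0$ in $L^\infty$, the limit satisfies $\mathcal{L}h=h$, and $d\mu=h\,dx$ is a $g$-invariant probability measure equivalent to Lebesgue.

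For ergodicity, let $A$ be Borel with $g^{-1}(A)=A$. Full-branchness gives $g^n(A\cap J)=A$ for every $n$-cylinder $J$, and a change of variables combined with bounded distortion yields
\begin{equation*}
\frac{\mu(A\cap J)}{\mu(J)}\asymp\mu(A)
\end{equation*}
uniformly in $n$ and $J$. Since $n$-cylinders have diameter at most $c^{-n}\to 0$ and generate the Borel $\sigma$-algebra, the martingale convergence / Lebesgue differentiation theorem forces the left-hand side to tend to $\mathbf{1}_A$ almost everywhere; this is compatible with a constant comparability to $\mu(A)$ only if $\mu(A)\in\{0,1\}$. The principal technical obstacle is the bounded distortion estimate, which is the one place the $C^2$ hypothesis and finiteness of the partition are really used; once it is established, existence, regularity, equivalence, and ergodicity all drop out by short standard arguments.
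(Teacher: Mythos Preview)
The paper does not supply its own proof of this statement: it is quoted as the \emph{Folklore Theorem} and the reader is referred to Adler--Flatto \cite{RA1} and Bowen \cite{RB}. Your sketch is the classical Ruelle--Perron--Frobenius argument (bounded distortion from $C^2$ and uniform expansion, then Ces\`aro averages of $\mathcal{L}^n\mathbf{1}$ to produce a density bounded above and below, then a density-point/martingale argument for ergodicity), which is essentially the route taken in those references; so there is nothing in the paper itself to compare it against, and the outline is correct.

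One small point worth tightening if you write this out in full: when you pass to a weak-$\ast$ limit $h$ in $L^\infty$ and want to conclude $\mathcal{L}h=h$, you are implicitly using that $\mathcal{L}$ is weak-$\ast$ continuous on $L^\infty$. This is true here because the Koopman operator $\phi\mapsto\phi\circ g$ is bounded on $L^1$ (a change of variables shows $\int\vert\phi\circ g\vert\,dx=\int\vert\phi\vert\,(\mathcal{L}\mathbf{1})\,dx\le C\Vert\phi\Vert_{L^1}$), and $\mathcal{L}$ on $L^\infty$ is its adjoint; but it is worth saying explicitly. Likewise, in the ergodicity step the equality $g^n(A\cap J)=A$ holds only modulo null sets when the invariance $g^{-1}(A)=A$ is taken mod~$0$, which is all you need.
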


\begin{corollary}\label{bet} Let $g$ and  $\mu$ be as in Theorem \ref{folklore}, then for almost every $x\in [0,1]$ the orbit of $x$, $O_g(x)=\{x,g(x),g^2(x),\ldots\}$, is dense in $[0,1]$.
\end{corollary}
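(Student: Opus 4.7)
The plan is to deduce density of the orbit from ergodicity of $\mu$ via the Birkhoff Ergodic Theorem, using equivalence of $\mu$ and Lebesgue measure to transfer the full-measure conclusion from $\mu$ back to Lebesgue.

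First I would fix a countable base $\{U_n\}_{n\ge 1}$ for the topology of $[0,1]$, for instance the collection of all open intervals with rational endpoints. Since $\mu$ is equivalent to Lebesgue and each $U_n$ has positive Lebesgue measure, we have $\mu(U_n)>0$ for every $n$. Applying the Birkhoff Ergodic Theorem to the indicator $\mathbf{1}_{U_n}$, together with the $g$-invariance and ergodicity of $\mu$, yields a set $E_n\subseteq [0,1]$ with $\mu(E_n)=1$ (hence Lebesgue measure $1$) such that for every $x\in E_n$,
\[
\lim_{N\to\infty}\frac{1}{N}\sum_{k=0}^{N-1}\mathbf{1}_{U_n}\bigl(g^k(x)\bigr)=\mu(U_n)>0.
\]
In particular, for every $x\in E_n$ the orbit $O_g(x)$ meets $U_n$, and indeed does so infinitely often.

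Next I would set $E=\bigcap_{n\ge 1} E_n$. As a countable intersection of sets of full Lebesgue measure, $E$ itself has full Lebesgue measure. For every $x\in E$ and every $n\ge 1$, the orbit $O_g(x)$ intersects $U_n$; since $\{U_n\}_{n\ge 1}$ is a topological base of $[0,1]$, this means $O_g(x)$ is dense in $[0,1]$. This proves the corollary.

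There is no real obstacle here: once the Folklore Theorem delivers the ergodic absolutely continuous invariant probability $\mu$, the argument is a standard application of Birkhoff's theorem to a countable collection of basic open sets, and the equivalence between $\mu$ and Lebesgue is used only to convert ``$\mu$-almost every'' into ``Lebesgue-almost every'' at the final step.
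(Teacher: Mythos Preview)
Your proof is correct and follows essentially the same line as the paper's: fix a countable base of open intervals, use ergodicity of $\mu$ to show that almost every orbit enters each one, and intersect the resulting full-measure sets. The only minor difference is that the paper invokes ergodicity directly (the backward saturation $\bigcup_{k\ge 0} g^{-k}(J)$ of a positive-measure interval $J$ is $g$-invariant up to a null set, hence of full $\mu$-measure) rather than going through Birkhoff's theorem, but this is a matter of taste and the two arguments are interchangeable.
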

\begin{proof} 
Let  $1\leq \ell \leq m$ and $J_{\ell,m}=(\frac{\ell-1}{m},\frac{\ell}{m})$.
 As the measure $\mu$ is equivalent to the Lebesgue measure, $\mu(J_{\ell,m})>0$, thus,  
as $g$ is ergodic, $D_{\ell,m}=\cup_{k=0}^{\infty} g^{-k}(J_{\ell,m})=[0,1]$ almost surely. 
So $J=\cap_{m=1}^{\infty}\cap_{ \ell=1}^m  D_{\ell,m}=[0,1]$ almost surely and $\overline{O_g(x)}=[0,1]$ for every $x\in J$.
\end{proof} 

We call a map $g:[0,1]\to [0,1]$ satisfying the hypotheses of  Theorem \ref{folklore} an {\it expanding piecewise smooth map}.
The next result and Corollary \ref{bet}  are of paramount importance for proving Theorem \ref{main2}.

\begin{lemma}\label{existeg} 
There exists an expanding piecewise smooth map $g:[0,1]\to [0,1]$ such that
$g\left( f(x)\right)=x$ for every $x\in [0,1)$ and every $f\in\mathscr{C}$.
\end{lemma}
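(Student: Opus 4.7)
The plan is to define $g$ explicitly by inverting each $\phi_i$ on its image and filling in the complementary gaps by affine surjections onto $[0,1]$, and then to verify the two hypotheses of the Folklore Theorem.

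Write $A_i=\phi_i([0,1])$; by assumption these are pairwise disjoint compact subintervals of $(0,1)$. Relabel them in increasing order along the line, and let $G_0,G_1,\ldots,G_n$ denote the open (possibly empty) intervals complementary to $\bigcup_i A_i$ in $[0,1]$, with $G_0$ adjacent to $0$ and $G_n$ adjacent to $1$. The non-empty sets among $A_1,\ldots,A_n,G_0,\ldots,G_n$ partition $[0,1]$ into finitely many intervals. On each $A_i$ I set $g=\phi_i^{-1}$; this is a $C^2$-diffeomorphism from $A_i$ onto $[0,1]$ with $|Dg(y)|=1/|D\phi_i(\phi_i^{-1}(y))|\ge 1/\kappa_i$, where $\kappa_i=\sup_{(0,1)}|D\phi_i|<1$. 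On each non-empty gap $G_j=(a_j,b_j)$ I set $g(y)=(y-a_j)/(b_j-a_j)$, which extends to a $C^2$-diffeomorphism of $\overline{G_j}$ onto $[0,1]$ with constant derivative $1/(b_j-a_j)$.

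To verify the Folklore hypotheses, condition (i) of Theorem \ref{folklore} holds by construction. For condition (ii), each $A_i$ has positive length, so $b_j-a_j<1$ and $1/(b_j-a_j)>1$ for every non-empty $G_j$; setting $c=\min\bigl(\min_i 1/\kappa_i,\ \min_j 1/(b_j-a_j)\bigr)>1$ therefore bounds $|Dg|$ from below by $c$ on the interior of each partition piece. For the left-inverse property, pick any $f=f_{x_1,\ldots,x_{n-1}}\in\mathscr{C}$ and $x\in[0,1)$: the unique $i$ with $x\in I_i$ satisfies $f(x)=\phi_i(x)\in A_i$, so $g(f(x))=\phi_i^{-1}(\phi_i(x))=x$.

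The construction is essentially forced by $g\circ f=\mathrm{id}$, which pins $g$ down on $\bigcup_i A_i$ independently of the partition points defining $f$; extending across the gaps is then flexible, and uniform expansion follows because every gap has length strictly less than $1$. There is no real obstacle beyond this elementary uniform-expansion check.
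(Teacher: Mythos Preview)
Your proof is correct and follows essentially the same construction as the paper: define $g=\phi_i^{-1}$ on each $A_i$ and extend by affine surjections onto $[0,1]$ across the complementary gaps, then check the Folklore hypotheses and the left-inverse identity. The only cosmetic differences are notational (you write $G_0,\ldots,G_n$ where the paper writes $B_1,\ldots,B_{n+1}$) and your harmless allowance for ``possibly empty'' gaps, which in fact never occur since the $A_i$ are pairwise disjoint compacta in $(0,1)$.
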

\begin{proof} 
The map
$\phi_i:[0,1]\to A_i$ is a contractive \mbox{$C^2$-diffeomorphism}, thus its inverse $\phi_i^{-1}:A_i\to [0,1]$ is a $C^2$-diffeomorphism satisfying $c_i=\inf_{x\in A_i} \vert D\phi_i^{-1}(x)\vert>1$. 
Moreover, the sets $A_1$, \ldots, $A_n$ are pairwise disjoint compact subintervals of $(0,1)$, thus $[0,1)\setminus \bigcup_{i=1}^n A_i$ is the union of the pairwise disjoint  intervals $B_1,\ldots,B_{n+1}$. 
Denote by 
$L_j$  the unique increasing affine map  from $\overline{B_j}$ onto $[0,1]$. Note that its slope is at least 
$\widetilde{c}=\min_j\vert  B_j \vert^{-1}$, where $\vert  B_j \vert$ is the length of $B_j$.
 By definition
$A_1,\ldots , A_n,B_1,\ldots ,B_{n+1} $ is a partition of $[0,1)$. 
Let $g:[0,1]\to [0,1]$ be the map defined by
\begin{equation}\label{defg}
g(x)=
\begin{cases}
\phi_i^{-1}(x) & \textrm{if}  \quad x\in A_i\quad \textrm{for some}\quad  1\le i\le n \\ 
L_j(x)  & \textrm{if} \quad  x\in B_j\quad \textrm{for some}\quad  1\le j\le n+1\end{cases}.
\end{equation}
 By construction, $g$ is an expanding piecewise smooth map which satisfies all the conditions of Theorem \ref{folklore} with $c=\min\, \{c_1,\ldots,c_{n},\widetilde{c}\}$. 
Let $f=f_{x_1,\ldots,x_{n-1}}\in \mathscr{C}$, then  
  $f(x)=\phi_i(x)\in A_i$,  for every $x\in I_i$,  thus $g\left(f(x)\right)=g\left(\phi_i(x)\right)=\phi_i^{-1}\left(\phi_i(x)\right)=x$.
\end{proof}

\begin{proof}[{\bf Proof of Theorem \ref{main2}}] 
Let $g:[0,1]\to [0,1]$ be the map defined in $(2.1)$, thus $g$ satisfies the hypotheses of Theorem \ref{folklore}.
Let $J=\{x\in [0,1)\mid \overline{O_g(x)}=[0,1]\}$, thus
by Corollary \ref{bet}, $J=[0,1]$  almost surely.
Let $\Omega$ be the set defined in $(\ref{omega1})$ and 
\begin{equation}\label{defU}
U=\Omega \cap J^{n-1},\,\, \textrm{thus $U=\Omega$  almost surely}.
\end{equation}
    
Let $(x_1,\ldots,x_{n-1})\in U$, hereafter we will prove that $f=f_{x_1,\ldots,x_{n-1}} \in\mathscr{C}$ admits an invariant quasi-partition.

We denote by $f^{-1}$ the inverse of the map $x\in [0,1) \mapsto f(x) \in f\left( [0,1)\right)$, thus its domain equals $f\left( [0,1)\right)$. If $f^{-1}$ can be iterated $k\geq 0$ times at $x$, 
we  denote by $f^{-k}(x)$ the $k$th iterate of $f^{-1}$ at $x$.
Let 
$\displaystyle G=[0,1)\setminus f\left( [0,1)\right),  
S=\bigcup_{k=0}^\infty f^k(G)$ and $ W=[0,1)\setminus S$.\\

\noindent Claim A.
The set $G$  has a nonempty interior and $G, f(G), f^2(G),\ldots$ are pairwise disjoint.

The image $f([0,1))=\cup_{i=1}^n f(I_i) \subseteq  \cup_{i=1}^n A_i$ and
$\cup_{i=1}^n A_i$ is a compact subset of the interval $(0,1)$, hence $ (0,1) \setminus \cup_{i=1}^n A_i$ is a nonempty open subset of $G$ which proves the first assertion.
Now note that $G\cap  f\left ([0,1)\right)=\emptyset$ and $f^k(G)\subseteq f\left ([0,1)\right)$, hence $f^k(G)\cap G=\emptyset$ for every integer $k\ge 1$. Therefore, as $f$ is injective,  the sets $G, f(G), f^2(G),\ldots$ are pairwise disjoint. 
This proves Claim A.\\

\noindent 
Claim B. $W\subseteq f\left( [0,1)\right)$ and $f^{-1}(W) \subseteq W$.

We have that $W=[0,1)\setminus \bigcup_{k=0}^\infty f^k(G)\subseteq [0,1)\setminus G=f\left([0,1)\right)$.
Moreover,   the fact that $f^{-1}(G)=\emptyset$ and $f$ is injective imply that $f^{-1}(S)=S$, so $f^{-1}(W)\subseteq W$.\\

\noindent 
Claim C. $f^{-1}=g\vert_{f\left( [0,1)\right)}$.

By Lemma \ref{existeg}, $g(f(x))=x=f^{-1}(f(x))$ for every $x\in [0,1)$.\\

\noindent Claim D. $\{x_1,\ldots,x_{n-1}\}\subseteq S$.

By contradiction, assume $x_i\in W$ for some $i$, thus
$O_{f^{-1}}(x_i)$ is well defined by Claim B. 
By Claim C, $O_{f^{-1}}(x_i)=O_g(x_i)$. As $x_i \in J$, $O_g(x_i)$ is dense in $[0,1)$ and so is $O_{f^{-1}}(x_i)$. As $G$ contains a nonempty open set, there exists $q_i\ge 0$ such that $f^{-q_i}(x_i)$ is in the interior of $G$ or equivalently, $x_i\in f^{q_i}(G)$. Thus $x_i\in S$, i.e. $x_i\not\in W$, which is a contradiction.\\

 \noindent Claim E. 
For every $1\le i \le n-1$, there exists a unique $q_i\ge 0$ such that $x_i\in f^{q_i}(G)$.

By Claim D, there exists $q_i\ge 0$ such that $x_i\in f^{q_i}(G)$ for every $1\le i \le n-1$.
By Claim A, $G, f(G), f^2(G),\ldots$ are pairwise disjoint, thus $q_i$ is unique.\\

\noindent Claim F.
Let $Q_i=\{ f^{-\ell}(x_i)\mid 0\le \ell \le q_i\} $, thus $f^{-1}(Q_i)\subseteq Q_i$ for every $1\le i\le n-1$.

     This claim follows from the fact that $f^{-q_i}(x_i)\in G$ and $f^{-1}(G)=\emptyset$.\\

\noindent Claim G. Let $ J_1,\ldots , J_m$ be the  connected components of $(0,1)\setminus \cup_{i=1}^{n-1} Q_i$, then $\mathscr{P}=\{ J_\ell\}_{\ell=1}^m$ is an invariant quasi-partition under $f$.

The collection $\mathscr{P}$ is a quasi-partition which fulfills condition $(P1)$. Let us show that it satisfies   condition $(P2)$.
Let  $1 \le \ell_0 \le m$, thus by  the definition of $ J_{\ell_0}$, $\{x_1,\ldots , x_{n-1} \} \cap J_{\ell_0}=\emptyset$, which implies that $f( J_{\ell_0})$ is an open interval. We claim that $f(J_{\ell_0})$ is contained in the open set $\bigcup_{\ell=1}^{m}J_{\ell}$. Suppose that this is false,
then $f(J_{\ell_0})\cap Q_i\neq\emptyset$ for some $1\le i\le n-1$. By the injectivity of $f$, $J_{\ell_0}\cap f^{-1}(Q_i)\neq\emptyset$.
By Claim F, $J_{\ell_0}\cap Q_i\neq\emptyset$, which contradicts the definition of $\mathscr{P}$. This proves Claim G. \\

The proof of Theorem \ref{main2} now follows from Claim G  together with Lemma \ref{ipartition}.
 \end{proof}

  \section{Asymptotically periodic orbits}
    
  We keep the notation used in Section $2$. 
  Let  $(x_1,\ldots,x_{n-1})\in U$, then
  the  itineraries  of  $f_{x_1,\ldots,x_{n-1}}$ are eventually periodic, by Theorem \ref{main2}. We will prove bellow that $f_{x_1,\ldots,x_{n-1}}$ is indeed asymptotically periodic for almost every $(x_1,\ldots,x_{n-1})\in U$.

By the definition, $0 < f(x) <1$, for every $x\in [0,1)$. 
  We say that $f_{x_1,\ldots,x_{n-1}} \in \mathscr{C}$ has a {\it g-connection} if there exist $1\le i\le n-1$, $0\le j\le n$ and $k\ge 1$ such that $g^k(x_i)=x_j$.

\begin{lemma}\label{N=0} 
For almost every $(x_1,\ldots,x_{n-1}) \in \Omega$, every
$f_{x_1,\ldots,x_{n-1}}$ has no $g$-connection.  
    \end{lemma}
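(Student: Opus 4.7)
The plan is to exploit that, by its construction in Lemma~\ref{existeg}, the expanding map $g$ depends only on the contractions $\phi_1,\ldots,\phi_n$ (through the images $A_i$ and the complementary gaps $B_j$) and \emph{not} on the tuple $(x_1,\ldots,x_{n-1})\in\Omega$. Hence a $g$-connection is simply the equation $g^k(x_i)=x_j$ between two coordinates of the tuple, the map $g$ being fixed once and for all. For each triple $(i,j,k)$ with $1\le i\le n-1$, $0\le j\le n$ and $k\ge 1$, set
\[
E_{i,j,k}=\{(x_1,\ldots,x_{n-1})\in\Omega:\ g^k(x_i)=x_j\},
\]
so that the collection of tuples yielding some $g$-connection is the countable union $\bigcup_{i,j,k}E_{i,j,k}$. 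It therefore suffices to prove that each $E_{i,j,k}$ has Lebesgue measure zero in $\R^{n-1}$.

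I would begin by recording the analytic structure of $g$ that I need. By Lemma~\ref{existeg}, $g$ has $2n+1$ monotone $C^2$ branches, each mapping its domain onto $[0,1]$, and $|Dg|\ge c>1$ in the interior of every branch. Consequently $g^k$ is itself a piecewise $C^2$ expanding map with finitely many onto branches and $|Dg^k|\ge c^k>1$. Two facts follow: first, for any $p\in[0,1]$ the preimage $(g^k)^{-1}(p)$ is finite; second, since $g^k(t)-t$ is strictly monotone on each branch, the $k$-periodic set $\{t\in[0,1]:\,g^k(t)=t\}$ is finite as well.

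The measure-zero verification then splits into three cases. If $j\in\{0,n\}$, so $x_j$ is the constant $0$ or $1$, then $E_{i,j,k}$ is contained in the finite union of affine hyperplanes $\{x_i=t\}$ with $t\in(g^k)^{-1}(\{0,1\})$, hence has measure zero in $\R^{n-1}$. If $i=j$, the condition forces $x_i$ into the finite fixed-point set of $g^k$, and $E_{i,j,k}$ again lies in finitely many hyperplanes. If $i\ne j$ with $1\le j\le n-1$, then the $(x_i,x_j)$-section of $E_{i,j,k}$ lies on the graph of $g^k$, a finite union of $C^2$ curves in $[0,1]^2$ of planar Lebesgue measure zero; Fubini (integrating out the remaining coordinates of $\Omega$) then gives $|E_{i,j,k}|=0$.

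Taking the countable union over all triples $(i,j,k)$ yields a null set in $\R^{n-1}$, whose complement intersected with $\Omega$ is the full-measure set on which $f_{x_1,\ldots,x_{n-1}}$ has no $g$-connection. I do not foresee a substantial obstacle: the only step requiring slight care is the initial observation that $g$ is independent of $(x_1,\ldots,x_{n-1})$, since this is what legitimizes treating each $x_i$ as a free parameter in a Fubini argument.
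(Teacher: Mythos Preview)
Your proof is correct and follows essentially the same approach as the paper: both express the bad set as the countable union $\bigcup_{i,j,k} E_{i,j,k}$ and show each piece is null, handling $j\in\{0,n\}$ via the finiteness of $(g^k)^{-1}(\{0,1\})$ and $1\le j\le n-1$ via the graph of $g^k$ together with Fubini. Your version is in fact slightly more careful than the paper's, since you separate out the diagonal case $i=j$ (where the ``graph'' description does not literally apply) and correctly describe $g^k$ as piecewise $C^2$ rather than smooth.
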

    \begin{proof} 
The set $N_{i,j,k}=\left\{(x_1,\ldots,x_{n-1})\in \Omega\mid x_{j}=g^{k}(x_i)\right\}$ 
is the graph of a smooth function, for every $1\le i,j\le  n-1$ and $k\ge 1$, thus it is a null set.
Let $N_{i,0,k}=\left\{(x_1,\ldots,x_{n-1})\in\Omega\mid x_{0}=0=g^{k}(x_i)\right\}$ and 
 $N_{i,n,k}=\left\{(x_1,\ldots,x_{n-1})\in\Omega\mid x_{n}=1=g^{k}(x_i)\right\}$. 
Let us prove that both are null sets.
Let $K= \cup_{k=0}^{\infty} g^{-k}(\{0,1\})$. Since $g$ is a finite-to-one map, the set $K$ is countable. 
Let $\pi_i: \Omega\to [0,1)$ be the projection $(x_1,\ldots,x_{n-1}) \mapsto x_i$, therefore 
$\pi_i (N_{i,0,k}) \subseteq K$ and $\pi_i(N_{i,n,k}) \subseteq K$, hence $N_{i,0,k}$ and $N_{i,n,k}$ are null sets. We have proved that the set 
$\displaystyle
\label{defS} \bigcup_{i=1}^{n-1}\bigcup_{j=0}^{n}\bigcup_{k=1}^{\infty} N_{i,j,k}$ 
 is a null set. This proves the lemma.
\end{proof}

\begin{proof}[{\bf Proof of Theorem \ref{main1}}]
    
 Hereafter, assume that $f=f_{x_1,\ldots,x_{n-1}}$ has no $g$-connection.\\

\noindent Claim H. For every $x\in\partial G$, $g(x)\in \{x_0,x_1,\ldots,x_{n}\}$.

The boundary of $G$ equals $\cup_{i=1}^n \partial f(I_i) \cup \{0,1\}$. 
By the definition of $g$ in (2.1), $g(0)=0=x_0$ and $g(1)=1=x_n$. 
Let $a,b\in  \partial f(I_i)$ with $a<b$.
Without loss of generality, assume that $f\vert_{I_i}$ is increasing, then $a=\lim_{\epsilon\to 0+} f(x_{i-1}+\epsilon)$. Moreover, $a\in  \overline{f\left(I_i\right)}\subseteq A_i$. 
By (2.1), $g$  is continuous on $A_i$ and $g(f(x))=x$ for every $x\in I_i$, so
        $$
    g(a)=g\left(\lim_{\epsilon\to 0^+}  f(x_{i-1}+\epsilon)\right)=\lim_{\epsilon\to 0^+} g\left(f(x_{i-1}+\epsilon) \right)=\lim_{\epsilon\to 0+} (x_{i-1}+\epsilon)=x_{i-1}.
    $$
Analogously $g(b)=x_i$  which proves Claim H.\\

\noindent Claim I. $f^{-k}(x_i)=g^k(x_i)$ for every $1\le i\le n-1$ and $0\le k\le q_i$.

By Claim E, $f^{-k}(x_i)$ is well defined  for every $0\le k\le q_i$. By the definition of $f^{-k}$ and Claim C, 
 $f^{-k}(x_i)=g^{k}(x_i)$,  which proves Claim I.\\

\noindent 
Claim J. $f^{-q_i}(x_i)$ belongs to the interior of $G$ for every $1\le i\le n-1$.
     
 By the definition of $q_i$, $f^{-q_i}(x_i)\in G$. By contradiction assume $f^{-q_i}(x_i)\in \partial G$, then, by Claim H, $g\left( f^{-q_i}(x_i)\right)=x_j$, where $0\le j\le n$. On the other hand, by Claim I,
     $f^{-q_i}(x_i)=g^{q_i}(x_i)$. Therefore, $g^{q_i+1}(x_i)=x_j$ which contradicts the fact that  $f$ has no $g$-connection. This proves Claim J.\\



\noindent 
Claim K. Let $q= \max \,\{q_1, \ldots , q_{n-1}\}$ and $E$ be the interior of  $\bigcup_{k=0}^q f^k(G)$, then 
$$
\bigcup _{i=1}^{n-1} Q_i\subseteq E.
$$ 

Let $1\le i \le n-1$. If $q_i=0$, by Claim J, $x_i$ belongs to the interior of $G$ which is contained in the open set $E$. Now assume that  $q_i\ge 1$ and let $1\le k \le q_i$. By Claim I, $f^{-k}(x_i)=g^k(x_i)$. As by assumption $f$ has no $g$-connection, we have that 
\begin{equation}\label{xon}
f^{-k}(x_i)\notin \{x_1,\ldots,x_{n}\}\quad\textrm{for every}\quad 1\le k\le q_i.
\end{equation}
Let us prove by recurrence that $Q_i\subseteq E$. By Claim J, $f^{-q_i}(x_i)$ belongs to the interior of $G\subseteq E$. Now let us consider $k=q_i-1$. 
By (\ref{xon}), $f^{-q_i}(x_i)\not\in \{x_1,\ldots,x_{n-1}\}$, thus $f$ is locally continuous at $f^{-q_i}(x_i)$ and injective, which implies that 
the point $f^{-k}(x_i)=f(f^{-q_i}(x_i))$ belongs to the interior of $f(G)$, so belongs to $E$. Moreover, by (\ref{xon}), if $k\ge 1$ then
$f^{-k}(x_i)\notin \{x_1,\ldots,x_{n}\}$ and the reasoning can be applied once more.\\
     
\noindent 
Claim L. $f^p(E)\cap E=\emptyset$ for every integer $p>q$.
     
It follows  from Claim A and the definition of $E$ that, for $p>q$, 
$$
E \cap f^p(E) \subseteq
\bigcup_{k=0}^q f^k(G) \cap \bigcup_{m=p}^{p+q} f^m(G) = \emptyset.
$$

Now we will use the above claims to conclude the proof of the theorem.
      
 Let $\mathscr{P}=\{J_{\ell}\}_{\ell=1}^m$ be the invariant quasi-partition under $f$ given by Claim G. 
Let $1\le \ell_0 \le m$ and $x\in  J_{\ell_0}$. In the proof of Lemma \ref{ipartition}, it is showed that the itinerary of $x$ in  $\mathscr{P}$, $\{\ell_k\}_{k=0}^{\infty}$,   is eventualy periodic. Therefore there exist $s \ge 1$ and  $p>q$ such that 
  $\ell_{s}=\ell_{s+p}$, where $q$ is given by Claim K. As $\mathscr{P}$ is invariant under $f$,
  $f^{p}(J_{\ell_s})\subseteq J_{\ell_s}$. 
Hence, if $J_{\ell_s}=(a,b)$, there exist $a\le c<d\le b\le 1$ such that $f^{2p}\left((a,b)\right)= (c,d)$. We claim that  $c>a$.
Assume by contradiction that $c=a$. We have that $a\in\partial J_{\ell_s}\subseteq \{0,1\}\cup\bigcup_{i=1}^{n-1} Q_i$ and $a<1$.   According to Claim K, there exists $\epsilon > 0$ such that $(a,a+ \varepsilon) \subseteq E\cap J_{\ell_s} $, 
thus, as $f^{2p}\vert_{(a,b)}$ is an increasing contractive map,  $f^{2p}((a, a+\varepsilon))\subseteq (a, a+\varepsilon)$.
We conclude that 
$f^{2p}(E)\cap E\neq \emptyset$,
which contradicts Claim L. This proves the claim.
Analogously $d<b$. Thus $f^{2p}\left((a,b)\right)= (c,d)$, where $f^{2p}\vert_{(a,b)}$ is a continuous contraction and $a< c<d < b$, therefore $f^{2p}$ has a unique fixed point $y\in (a,b)$.
Notice that $\gamma=O_f(y)$ is a periodic orbit of $f$, moreover, it is clear that $\omega(x)=\gamma$ for every $x\in J_{\ell_0}$.

Now we consider the case in which $x\in [0,1) \setminus\cup_{\ell=1}^m J_{\ell}=\{0\} \cup \bigcup _{i=1}^{n-1} Q_i$.
By the proof of Lemma \ref{ipartition}, either  $O_f(x)$ is in the finite set $[0,1) \setminus\cup_{\ell=1}^m J_{\ell}$ 
or there exists  $k\geq 1$ such that $f^k(x)\in\cup_{\ell=1}^m J_{\ell}$.
The first case is impossible because $f$
 has no $g$-connection and $f(0)>0$. In the latter case, by the above, 
$\omega(x)$ is a periodic orbit. 
We have proved that there exist $r\ge 1$ stable periodic orbits $\gamma_1,\ldots, \gamma_r$ with
  $\omega(x) \in\{\gamma_1,\ldots,\gamma_r\}$, for every $x\in [0,1)$. By   \cite[Theorem 1.1]{NP}, we have that $r\le n$.  This concludes the proof of Theorem \ref{main1}.
 \end{proof}

\section{Final Remarks}

We present here an example to expose the difference between the two periodicity notions considered so far.

\begin{example}\label{example} Let $f_1:[0,1)\to [0,1)$ be the PC of two intervals defined by
$$
f_1(x)=
\begin{cases}
\phantom{-}\dfrac14+\dfrac{x}{2}\quad{\rm if}\quad x\in I_1=[0,1/2)\\[1em]
-\dfrac{1}{4}+\dfrac{x}{2}\quad{\rm if}\quad x\in I_2=[1/2,1)
\end{cases}.
$$
\end{example}
\noindent
Notice that $f_1(I_1) \cup f_1(I_2)\subseteq I_1$, thus $f_1$ admits only the itineraries 1111\ldots and 2111\ldots which are eventually periodic. On the other hand, for every $x\in [0,1)$, the sequence $f^2(x),f^3(x),\ldots$ is strictly increasing and converges to $\frac12$, thus $f_1$ has no periodic orbit and the one-point set $\{\frac12\}$ works as a global attractor of $f_1$. 
Therefore $f_1$ is not asymptotically periodic. 

Notice that redefining $f_1$ at $x_1=\frac12$ yields the asymptotically periodic map
$$
f_2(x)=
\begin{cases}
\phantom{-}\dfrac14+\dfrac{x}{2}\quad\textrm{if}\quad x\in [0,1/2]\\[1em]
-\dfrac{1}{4}+\dfrac{x}{2}\quad \textrm{if}\quad x\in (1/2,1)
\end{cases}.
$$
The map $f_2$ is not stable in the $C^0$-uniform topology: the fixed-point $x_1=\frac12$ is easily destroyed by a perturbation of $f_2$. Precisely, for every $0<\epsilon<\frac14$, the $2$-interval $PC$  $[0,1)\to [0,1)$ defined by $x \mapsto f_2(x)+\epsilon$ has no fixed-point (i.e. no 1-periodic orbit). 

We observe that a piecewise contraction $f$ given by Theorem \ref{main1} is asymptotically periodic in a stable way: if $\gamma$ is a $k$-periodic orbit of $f$, then any PC  $C^0$-close to $f$ has a $k$-periodic orbit close to $\gamma$.

\medskip

\noindent
{\it Acknowledgements} The first-named author would like to thank the 
Departamento de Computa\c c\~ao e Matem\'atica, 
 USP, Ribeir\~ao Preto, Brazil and 
De Giorgi Center, Italy, for hospitality while this work was done.

\end{document}